\documentclass{amsart}
\usepackage{amssymb,color,epsf}
\usepackage [cmtip,arrow]{xy}
\xyoption{all}
\usepackage {pb-diagram,pb-xy}
\usepackage{enumerate}

\ifx\pdfpageheight\undefined
   \usepackage[dvips,colorlinks=true,linkcolor=blue,citecolor=red,%
      urlcolor=green]{hyperref}
   \usepackage[dvips]{graphicx}
   \makeatletter
   \edef\Gin@extensions{\Gin@extensions,.mps}
   \DeclareGraphicsRule{.mps}{eps}{*}{}
   \makeatother
\else
 \usepackage[pdftex]{graphicx}
   \usepackage[bookmarksopen=false,pdftex=true,breaklinks=true,%
      backref=page,pagebackref=true,plainpages=false,%
      hyperindex=true,pdfstartview=FitH,colorlinks=true,%
      pdfpagelabels=true,colorlinks=true,linkcolor=blue,%
      citecolor=red,urlcolor=green,hypertexnames=false%
      ]%
   {hyperref}
\fi
\usepackage{bm}

\newcommand{\Z}{\mathbb{Z}}

\usepackage{amsmath,amssymb,amsfonts}
\newtheorem{theorem}{Theorem}
\newtheorem{lemma}[theorem]{Lemma}
\newtheorem{corollary}[theorem]{Corollary}

\theoremstyle{definition}
\newtheorem{definition}[theorem]{Definition}

\newtheorem{notation}[theorem]{Notation}

\theoremstyle{remark}
\newtheorem{remark}[theorem]{Remark}

\definecolor{DarkBlue}{rgb}{0,0.1,0.55}

\numberwithin{equation}{section}

%    Absolute value notation

%    Blank box placeholder for figures (to avoid requiring any
%    particular graphics capabilities for printing this document).

%%\newenvironment{proof}[1]{\trivlist \item[\hskip \labelsep{\bf
%%#1}]}{\hfill\mbox{\Box$} \endtrivlist}

\newcommand {\hide}[1]{}

\newcommand{\nocomma}{}
\newcommand{\noplus}{}
\newcommand{\tmop}[1]{\ensuremath{\operatorname{#1}}}
\newcommand{\bidegree}{\mathrm{bidegree}}
\begin{document}

\title[Spectral Sequences, Exact Couples and Persistent Homology]
{Spectral Sequences, Exact Couples and Persistent Homology of
Filtrations}

\author{Saugata Basu}
%    Address of record for the research reported here
\address{Department of Mathematics,
Purdue University, West Lafayette, IN 47906, U.S.A.}
%    Current address
\email{sbasu@math.purdue.edu}
\urladdr{www.math.purdue.edu/~sbasu}
\author{Laxmi Parida}
\address{
Computational Genomics,\\
IBM T.J. Watson Research Center,\\
Yorktown Heights, NY 10598.
}
\email{parida@us.ibm.com}
\urladdr{researcher.ibm.com/person/us-parida}

\thanks{The first author was partially supported by NSF grants
CCF-1319080 and DMS-1161629.}

\subjclass[2010]{Primary 55T05; Secondary 68P30.}
\keywords{spectral sequence, filtration, persistent homology, exact couple}
\maketitle

\begin{abstract}
  In this paper we study the relationship between a very classical algebraic
  object associated to a filtration of topological spaces, namely a spectral
  sequence introduced by Leray in the 1940's, and a more recently invented
  object that has found many applications -- namely, its persistent homology
  groups. We show the existence of a long exact sequence of groups linking
  these two objects and using it derive formulas expressing the dimensions of
  each individual groups of one object in terms of the dimensions of the
  groups in the other object. The main tool used to mediate between these
  objects is the notion of exact couples first introduced by Massey in 1952.
\end{abstract}

\section{Introduction}

Given a topological space $X$ (which for the purposes of the current paper
will be taken to be a finite CW-complex) a finite filtration, $\mathcal{F}$ of
$X$, is a sequence of subspaces
\begin{eqnarray*}
  \emptyset = X_{- 1} = X_0 \subset X_1 \subset \cdots \subset X_N = X_{N + 1}
  = \cdots = X &  & 
\end{eqnarray*}
(we will denote the subspace $X_p$ in the sequence by $\mathcal{F}_- X$). A
very classical technique in algebraic topology for computing topological
invariants of a space $X$ is to consider a filtration $\mathcal{F}$ of $X$
where the successive spaces $\mathcal{F}_s X$ capture progressively more and
more of the topology of $X$. For example, in case $X$ is a CW-complex one can
take for $\mathcal{F}_p X$ the $p$-th skeleton
$\ensuremath{\operatorname{sk}}_p (X)$ consisting of all cells of dimension at
most $p$. More generally, given a cellular map $f : X \rightarrow Y$, one can
take for $\mathcal{F}_p X$ the inverse image under $f$ of
$\mathrm{sk}_p (Y)$. One then associates to this sequence a
sequence of algebraic objects which in nice situations is expected to
``converge'' (in an appropriate sense) to the topological invariant (such as
the homology or cohomology groups) associated to $X$ itself, directly
computing which is often an intractable problem. This sequence of algebraic
approximations is called a \emph{spectral sequence} associated to the
filtration $\mathcal{F}$, and was first introduced by Leray {\cite{Leray46}}
in 1946 (see also the book by Dieudonn{\'e} {\cite[page 137]{Dieudonne-book}}
for a comprehensive historical survey).

Spectral sequences are now ubiquitous in mathematics. A typical application
which is common in discrete geometry, as well as in quantitative real
algebraic geometry, is to use the initial terms of a certain spectral sequence
to give upper bounds on the topological complexity (for example, the sum of
Betti numbers) of the object of interest $X$ (often a semi-algebraic subset of
some $\mathbb{R}^n$) (see for example, \cite{BZ,GVZ04} for
applications of this kind). Spectral sequences also have algorithmic
applications in the context of computational geometry (see for example
\cite{Basu5}).

Much more recently the notion of \emph{persistent homology}
\cite{Edelsbrunner-Harer2010,Weinberger2011} associated to a
filtration has become an important tool in various applications. In contrast
to spectral sequences discussed in the previous paragraph, the emphasis here
is not so much on studying the topology of the final object $X$, but rather on
the intermediate spaces of the filtration. Indeed the final object $X$ in many
cases is either contractible or homologically trivial. For example, this is
the case for filtrations arising from alpha-complexes introduced by 
Edelsbrunner et al in \cite{Edelsbrunner-et-al-1983}. The \emph{persistent
homology groups}
  (see Definition \ref{def:persistent} below for a precise definition) are defined such that
their dimensions equal the dimensions of spaces of homological cycles that
appear at a certain fixed point of the filtration $\mathcal{F}$ and disappear
at a certain (later) point. The homological cycles that persist for long
intervals often carry important information about the underlying data sets
that give rise to the filtration, and this is why computing them is important
in practice. We refer the reader to survey articles
\cite{Carlsson2008,Ghrist2008,Weinberger2011} for details
regarding these applications.

While spectral sequences and persistent homology were invented for entirely
different purposes as explained above, they are both associated to filtrations
of topological spaces and it is natural to wonder about the exact relationship
between these two notions, and in particular, whether the dimensions of the
groups appearing in the spectral sequence of a filtration carries any more
information than the dimensions of the persistent homology groups of the same
filtration. One of the results in this paper (Theorem
\ref{thm:spectral-to-persistent} below) shows that this is not the case, and
the dimensions of the groups appearing in the spectral sequence of a
filtration can be recovered from the dimensions of its persistent homology
groups.  It has been observed by several authors (see for example,
\cite{Carlsson2008,Edelsbrunner-Harer2010,Ghrist2008})
that there exists a close connection between the spectral sequence of a
filtration and its persistent homology groups. The goal of this paper is to
make precise this relationship -- in particular, to derive formulas which
expresses the dimensions of each group appearing in the spectral sequence of a
filtration in terms of the persistent Betti numbers and vice versa.

For the rest of the paper we fix a field $k$ of coefficients and all homology
groups will be taken with coefficients in $k$, and we will omit the dependence
on $k$ from our notation henceforth. Given a \emph{finite} filtration
$\mathcal{F}$, given by $\emptyset = \cdots = X_{- 1} = X_0 \subset X_1
\subset \cdots \subset X_s \subset X_{s + 1} \subset \cdots \subset X_N = X_{N
+ 1} = \cdots = X$ of finite CW-complexes, there is an associated homology
spectral sequence 
\[
\left(E^{(r)} (\mathcal{F}) = \bigoplus_{p,q} E^{(r)}_{p,q} (\mathcal{F}), d^{(r)}\right)_{r \geq 1}
\] 
(see Definition \ref{def:spectral-sequence} in \S \ref{sec:spectral}), where each
$E^{(r)}_{p,q} (\mathcal{F})$ is a finite dimensional $k$-vector
space. 

For each fixed $r \geq 1 $, the set of groups $(E^{(r)}_{p,q}
(\mathcal{F}))_{p,q}$ is sometimes called the \emph{$r$-th page} of the
spectral sequence of $\mathcal{F}$, and they converge in an appropriate sense
explained later to the homology of $X$ as $r \rightarrow \infty$. Similarly,
the persistent homology groups, $(H_n^{s, t} (\mathcal{F}))_{n, s, t}$
(see Definition \ref{def:persistent} in \S \ref{sec:persistent}) are also finite
dimensional $k$-vector spaces, and correspond to the images under the linear
maps induced by the inclusions$(X_s\hookrightarrow X_t)_{s,t\in\mathbb{Z}, s\leq t}$.

It is natural to ask whether the sequence of dimensions $(\dim_k E^{(r)}_{p,q} (\mathcal{F}))_{p,q \in \mathbb{Z}, r \geq 1}$ determine the sequence
$(b_{n}^{s, t} (\mathcal{F}) = \dim_k H_n^{s, t} (\mathcal{F}))_{n, s, t
\in \mathbb{Z}, s \leq t}$ and conversely. In the book
{\cite{Edelsbrunner-Harer2010}} the authors give a relation between these sets
of numbers (see the {{\em Spectral Sequence Theorem}}, page 171 in
{\cite{Edelsbrunner-Harer2010}}), but this theorem does not produce a simple
formula expressing the dimension of each group,  $E^{(r)}_{p,q}
(\mathcal{F})$, in the spectral sequence,  in terms of the persistent Betti
numbers $b_{n}^{s, t} (\mathcal{F})$. The statement of this theorem in
{\cite{Edelsbrunner-Harer2010}} has an error and there are some terms missing
from the right-hand side of the equality. We fix this error and obtain the
corrected version as a corollary to one of our theorems (see Corollary
\ref{cor:main} and Remark \ref{rem:correction} below).

We study the relationship between the persistent homology groups of a
filtration and the homology spectral sequence via another classical tool in
algebraic topology -- namely \emph{exact couples}, first introduced by
Massey \cite{Massey52}. This gives us a simple way to relate the persistent
Betti numbers with the dimensions of the groups occurring in the spectral
sequence associated to the filtration. A hidden  motivation behind this paper
is to clarify the relationship between the spectral sequence groups and the
persistent homology groups without getting bogged down in a morass of indices
and a large number of intermediate groups of cycles and boundaries. The exact
couple formulation is very elegant (and economical) in this respect, and the
only extra groups (i.e. groups other than the  various $E^{(r)}_{p,q}(\mathcal{F})$) that appear are subgroups of the homology groups of spaces
appearing in the filtration. In fact, we will identify these extra groups with
the persistent homology groups of the filtration (see Lemma \ref{lem:basic}
below). We refer the reader also to the fundamental paper of Boardman \cite{Boardman}
where the technique of unraveling exact couples is explained at length.

Our main results are the following.

\begin{theorem}
  \label{thm:spectral-to-persistent} 
  Let $\mathcal{F}$ denote the 
  filtration,  $\emptyset = \cdots = X_{- 1} = X_0 \subset X_1 \subset \cdots
  \subset X_p \subset X_{p + 1} \subset \cdots \subset X_N = X_{N + 1} =
  \cdots = X$ where each $X_i$ is a finite CW-complex.  Then,  for 
  %%each $r >0$, $n, s \geq 0$,
  $r,p,q, \in \Z, r\geq 1$ and setting $n=p+q$,
  \begin{eqnarray*}
    \dim_k E^{(r)}_{p,q} (\mathcal{F}) & = & (b_n^{p, p + r - 1}
    (\mathcal{F}) - b_n^{p - 1, p + r - 1} (\mathcal{F})) + (b_{n - 1}^{p - r,
    p - 1} (\mathcal{F}) - b_{n - 1}^{p - r, p} (\mathcal{F})) . 
    %%\label{eqn:main}
  \end{eqnarray*}
\end{theorem}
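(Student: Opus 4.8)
The plan is to work entirely inside the exact couple $(D(\mathcal{F}),E(\mathcal{F}),i,j,k)$ associated to the filtration in Section \ref{sec:spectral}, whose $E$-term in bidegree $(n,s)$ is $H_n(\mathcal{F}_sX,\mathcal{F}_{s-1}X)$ and whose $D$-term in bidegree $(n,s)$ is $H_n(\mathcal{F}_sX)$, with $i$ induced by the inclusions $\mathcal{F}_sX\hookrightarrow\mathcal{F}_{s+1}X$ (bidegree $(0,1)$), $j$ the natural map (bidegree $(0,0)$), and $k$ the connecting homomorphism (bidegree $(-1,-1)$). The $r$-th page $E^{(r)}(\mathcal{F})$ is by definition the $E$-term of the $(r-1)$-st derived couple. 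First I would record that in the $(r-1)$-st derived couple the maps $i^{(r)},j^{(r)},k^{(r)}$ have bidegrees $(0,1)$, $(0,-(r-1))$, $(-1,-1)$ respectively, and — this is Lemma \ref{lem:basic} — that the $D$-term satisfies $(D^{(r)}(\mathcal{F}))_{n,s}=\operatorname{im}\big(H_n(\mathcal{F}_{s-r+1}X)\to H_n(\mathcal{F}_sX)\big)=H_n^{s-r+1,s}(\mathcal{F})$, with $i^{(r)}$ acting on it simply as the restriction of the inclusion-induced map $H_n(\mathcal{F}_sX)\to H_n(\mathcal{F}_{s+1}X)$.

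Next, the defining exactness of the derived couple gives, for each $(n,s)$, the exact sequence
$$(D^{(r)})_{n,s+r-2}\xrightarrow{\,i^{(r)}\,}(D^{(r)})_{n,s+r-1}\xrightarrow{\,j^{(r)}\,}E^{(r)}_{n,s}(\mathcal{F})\xrightarrow{\,k^{(r)}\,}(D^{(r)})_{n-1,s-1}\xrightarrow{\,i^{(r)}\,}(D^{(r)})_{n-1,s},$$
where the subscripts are forced by the bidegrees above. This yields the short exact sequence
$$0\to\operatorname{coker}\big(i^{(r)}\colon (D^{(r)})_{n,s+r-2}\to(D^{(r)})_{n,s+r-1}\big)\to E^{(r)}_{n,s}(\mathcal{F})\to\ker\big(i^{(r)}\colon (D^{(r)})_{n-1,s-1}\to(D^{(r)})_{n-1,s}\big)\to0,$$
so it remains only to compute the dimensions of the two outer terms.

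For the kernel term, substituting Lemma \ref{lem:basic} identifies it with the kernel of the inclusion-induced map $H_{n-1}^{s-r,s-1}(\mathcal{F})\to H_{n-1}(\mathcal{F}_sX)$; since $H_{n-1}^{s-r,s-1}(\mathcal{F})=\operatorname{im}(H_{n-1}(\mathcal{F}_{s-r}X)\to H_{n-1}(\mathcal{F}_{s-1}X))$ maps onto $H_{n-1}^{s-r,s}(\mathcal{F})=\operatorname{im}(H_{n-1}(\mathcal{F}_{s-r}X)\to H_{n-1}(\mathcal{F}_sX))$, rank--nullity gives dimension $b_{n-1}^{s-r,s-1}(\mathcal{F})-b_{n-1}^{s-r,s}(\mathcal{F})$. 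For the cokernel term, Lemma \ref{lem:basic} identifies the image of the source under $i^{(r)}$ with $\operatorname{im}(H_n(\mathcal{F}_{s-1}X)\to H_n(\mathcal{F}_{s+r-1}X))=H_n^{s-1,s+r-1}(\mathcal{F})$, a subgroup of $(D^{(r)})_{n,s+r-1}=H_n^{s,s+r-1}(\mathcal{F})$, so the cokernel has dimension $b_n^{s,s+r-1}(\mathcal{F})-b_n^{s-1,s+r-1}(\mathcal{F})$. Adding the two contributions gives the claimed formula.

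The routine but delicate part of the argument — the step I expect to require the most care — is the bookkeeping in the first paragraph: pinning down the bidegrees of the maps in successive derived couples and verifying the identification $(D^{(r)})_{n,s}\cong H_n^{s-r+1,s}(\mathcal{F})$ together with the assertion that $i^{(r)}$ really is the inclusion-induced map on these groups, which is exactly what makes the kernels and cokernels above equal differences of persistent Betti numbers. Once this dictionary between the derived couple and the persistent homology groups is in place, the rest is three-term exactness plus rank--nullity. As a sanity check, the case $r=1$ reduces to the long exact homology sequence of the pair $(\mathcal{F}_sX,\mathcal{F}_{s-1}X)$, recovering $\dim_k H_n(\mathcal{F}_sX,\mathcal{F}_{s-1}X)=(b_n^{s,s}(\mathcal{F})-b_n^{s-1,s}(\mathcal{F}))+(b_{n-1}^{s-1,s-1}(\mathcal{F})-b_{n-1}^{s-1,s}(\mathcal{F}))$.
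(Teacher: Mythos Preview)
Your argument is correct and is essentially the paper's own proof: unroll the $r$-th derived exact couple, identify the $D^{(r)}$-terms with persistent homology groups (Lemma~\ref{lem:basic}), and read off $\dim_k E^{(r)}_{n,s}$ from the resulting five-term exact sequence. Two cosmetic differences are worth flagging. First, you use the alternative grading on $D^{(r)}$ (the one mentioned in Remark~\ref{rem:indices}), so that $(D^{(r)})_{n,s}\subset H_n(\mathcal{F}_sX)$ and $j^{(r)}$ has bidegree $(0,-(r-1))$; in the paper's convention $D^{(r)}_{n,s}\subset H_n(\mathcal{F}_{s+r-1}X)$ and $p^{(r)}$ has bidegree $(0,0)$, so your statement of ``Lemma~\ref{lem:basic}'' is the paper's statement shifted by $r-1$ in the filtration index. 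Second, where the paper applies Lemma~\ref{lem:exact} directly to the five-term sequence, you split it into the equivalent short exact sequence $0\to\operatorname{coker}(i^{(r)})\to E^{(r)}_{n,s}\to\ker(i^{(r)})\to 0$; the two computations are identical in content.
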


Theorem \ref{thm:spectral-to-persistent} is a direct consequence of the
existence of a long exact sequence linking the the spectral groups
$E^{(r)}_{p,q} (\mathcal{F})$ and the persistent homology groups. This long
exact sequence which appears in Theorem \ref{thm:basic} below encapsulates the
exact relationship between these groups.

We also define (following {\cite{Edelsbrunner-Harer2010}}) for $i < j$
the \emph{persistent multiplicities} of the filtration $\mathcal{F}$ by
\begin{eqnarray}
  \mu^{i, j}_n (\mathcal{F}) & = & (b^{i, j - 1}_n (\mathcal{F}) - b^{i, j}_n
  (\mathcal{F})) - (b^{i - 1, j - 1}_n (\mathcal{F}) - b^{i - 1, j}_n
  (\mathcal{F})) .  \label{eqn:mu}
\end{eqnarray}
In the language of persistence theory, the number $\mu^{i, j}_n (\mathcal{F})$
has the following interpretation. It is the dimension of the $k$-vector space
spanned by the $n$-dimensional homology cycles, which are born at time $i$ and
gets killed at time $j$ (see {\cite{Edelsbrunner-Harer2010}}).

We obtain the following corollary to Theorem \ref{thm:spectral-to-persistent}
recovering (and correcting slightly) a result in
\cite[page 171, \emph{Spectral Sequence
Theorem}]{Edelsbrunner-Harer2010}.
%% using a slightly different notation (see also Remark\ref{rem:indices} below).

\begin{corollary}
  \label{cor:main} The following relation holds for every $r \geq 1$, and all $n
  \geq 0$.
  \begin{eqnarray*}
    \sum_{p+q=n} \dim_k E^{(r)}_{p, q} (\mathcal{F}) & = & \sum_{j - i \geq r}
    (\mu_n^{i, j} (\mathcal{F}) + \mu_{n - 1}^{i, j} (\mathcal{F})) + b_n (X).
  \end{eqnarray*}
\end{corollary}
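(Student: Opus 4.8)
The plan is to deduce Corollary~\ref{cor:main} from Theorem~\ref{thm:spectral-to-persistent} essentially by summing over $s$, and then simplifying the telescoping sums. First I would fix $n \geq 0$ and $r > 0$, and write, using Theorem~\ref{thm:spectral-to-persistent},
\begin{eqnarray*}
  \sum_s \dim_k E^{(r)}_{n,s}(\mathcal{F}) & = & \sum_s \left( b_n^{s,s+r-1}(\mathcal{F}) - b_n^{s-1,s+r-1}(\mathcal{F}) \right) \\
  & & {} + \sum_s \left( b_{n-1}^{s-r,s-1}(\mathcal{F}) - b_{n-1}^{s-r,s}(\mathcal{F}) \right).
\end{eqnarray*}
The two sums over $s$ are finite, since the filtration is finite and all the relevant Betti numbers vanish outside a bounded range of indices (this finiteness is what makes the manipulations below legitimate and is where I would be careful about boundary terms). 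The strategy is to recognize each summand as a partial difference of the quantity $\mu_n^{i,j}(\mathcal{F})$ defined in~(\ref{eqn:mu}), reorganize the double sum by the ``lifespan'' $j-i$, and collect the leftover boundary terms into $b_n(X)$.

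The key step is the following telescoping identity. Write $\beta_n^{i,j}$ for $b_n^{i,j}(\mathcal{F})$. From~(\ref{eqn:mu}) we have $\mu_n^{i,j} = (\beta_n^{i,j-1} - \beta_n^{i,j}) - (\beta_n^{i-1,j-1} - \beta_n^{i-1,j})$. I would sum this over all pairs $(i,j)$ with $j - i \geq r$, first summing over $i \le s$ for fixed difference, and use that $\beta_n^{i,j} = \beta_n(X)$ once $i \le -1$ and $j \geq N$, while $\beta_n^{i,j} = 0$ once $j \le -1$. Carrying out the inner telescoping in $i$ collapses $\sum_{i : j-i\geq r}\bigl( (\beta_n^{i,j-1}-\beta_n^{i,j}) - (\beta_n^{i-1,j-1}-\beta_n^{i-1,j})\bigr)$ to a single difference $(\beta_n^{j-r,j-1} - \beta_n^{j-r,j})$, matched exactly to the first summand $b_n^{s,s+r-1} - b_n^{s-1,s+r-1}$ appearing in the theorem after the reindexing $j = s+r$, $i = s$ (and similarly, with an index shift by one in $n$, the term involving $b_{n-1}$ matches $\sum \mu_{n-1}^{i,j}$ with $j-i \geq r$). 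The residual terms left over after telescoping, coming from the ends of the filtration where $\beta_n^{i,j}$ stabilizes to $b_n(X)$, assemble to the single summand $b_n(X)$ on the right-hand side.

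Concretely, after substituting and regrouping I expect the identity to reduce to showing
\begin{eqnarray*}
  \sum_s \left( b_n^{s,s+r-1}(\mathcal{F}) - b_n^{s-1,s+r-1}(\mathcal{F}) \right) & = & \sum_{j-i \geq r} \mu_n^{i,j}(\mathcal{F}) + b_n(X),
\end{eqnarray*}
together with the analogous statement for the $(n-1)$-terms but without the extra $b_n(X)$ (the $b_n(X)$ arises only once because only the first family of terms survives in the $j \to \infty$ / $i \to -\infty$ limit for the correct homological degree $n$, whereas the $(n-1)$-family contributes $b_{n-1}(X)$ with a sign that cancels, or simply vanishes after reindexing — this sign bookkeeping is the subtle point). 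Adding the two relations and using the theorem then gives the corollary. The main obstacle, and the part requiring genuine care rather than routine computation, is the boundary-term analysis: correctly identifying which end-of-filtration contributions survive the telescoping, checking that the shifted degree $n-1$ family does not spuriously contribute a second copy of a Betti number of $X$, and confirming that the constant is exactly $b_n(X)$ and not, say, $b_n(X) + b_{n-1}(X)$ or $b_n(X) - b_{n-1}(X)$. I would handle this by being fully explicit about the ranges of summation, using the stabilization $X_{-1} = \emptyset$ and $X_N = X$, and verifying the formula against a small example (e.g. a filtration with a single nontrivial persistent class) as a sanity check. Once the boundary accounting is pinned down, the rest is a direct, if slightly tedious, telescoping argument.
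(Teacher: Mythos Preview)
Your overall strategy is exactly the paper's: sum Theorem~\ref{thm:spectral-to-persistent} over $s$, then express each of the two resulting sums as $\sum_{j-i\geq r}\mu^{i,j}$ plus a boundary term via telescoping. Your expected pair of identities
\[
\sum_s \bigl(b_n^{s,s+r-1}-b_n^{s-1,s+r-1}\bigr)=\sum_{j-i\geq r}\mu_n^{i,j}+b_n(X),\qquad
\sum_s \bigl(b_{n-1}^{s-r,s-1}-b_{n-1}^{s-r,s}\bigr)=\sum_{j-i\geq r}\mu_{n-1}^{i,j}
\]
are precisely what the paper proves (its equations (\ref{eqn:cormain3}) and (\ref{eqn:cormain2})), and adding them finishes the corollary.

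However, the specific telescoping you wrote out is mismatched. Telescoping $\mu_n^{i,j}$ in $i$ for fixed $j$ does collapse to $(\beta_n^{j-r,j-1}-\beta_n^{j-r,j})$, but under your reindexing $j=s+r$ this becomes $\beta_n^{s,s+r-1}-\beta_n^{s,s+r}$, which is \emph{not} the first summand $b_n^{s,s+r-1}-b_n^{s-1,s+r-1}$ from the theorem. The expression you obtained has fixed first superscript and varying second superscript; it matches the \emph{second} summand $b_{n-1}^{s-r,s-1}-b_{n-1}^{s-r,s}$ (after shifting $n$ and setting $j=s$), and this telescoping produces no boundary term because $\beta_n^{-\infty,\cdot}=0$. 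To match the first summand you must telescope in $j$ for fixed $i$ (or, as the paper does, in the gap variable $t=j-i$); the surviving boundary term is $\sum_i(\beta_n^{i,N}-\beta_n^{i-1,N})=b_n(X)$, which is where the extra $b_n(X)$ actually comes from. So the bookkeeping you flagged as ``the subtle point'' is indeed subtle, and in your sketch you assigned the boundary term to the wrong half of the computation. Once this swap is corrected the argument goes through exactly as you describe.
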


\begin{remark}
  \label{rem:correction}In {\cite{Edelsbrunner-Harer2010}} the statement of
  the ``Spectral Sequence Theorem'' is stated incorrectly as
  \begin{eqnarray*}
    \sum_{p+q=n} \dim_k E^{(r)}_{p, q} (\mathcal{F}) & = & \sum_{j - i \geq r}
    \mu_n^{i, j} (\mathcal{F}),
  \end{eqnarray*}
  and in a previous version of this paper we had a wrong proof of the above
  erroneous statement. We thank Ana Romero for pointing out a counter-example
  to this statement which appears in \cite{Romero-et-al-2013} and which
  directed us to the correct statement. A different corrected version of the
  statement also appears in \cite{Romero-et-al-2013}, where the left hand
  side is the sum of the dimensions of a different set of groups. To our
  knowledge the equality in Corollary \ref{cor:main} is new.
\end{remark}

We next show how to express the persistent Betti numbers in terms of
dimensions of the vector spaces appearing in spectral sequences associated to
truncations of the filtrations $\mathcal{F}$.

\begin{notation}
  Given a finite filtration $\mathcal{F}$, given by \ $\emptyset = \cdots =
  X_{- 1} = X_0 \subset X_1 \subset \cdots \subset X_s \subset X_{s + 1}
  \subset \cdots \subset X_N = X_{N + 1} = \cdots$ of finite CW-complexes, let
  for $0 \leq t \leq N$, $\mathcal{F}_{\leq t}$ denote the truncated
  filtration
  \[ \emptyset = \cdots = X_{- 1} = X_0 \subset X_1 \subset \cdots \subset
     X_t = X_{t + 1} = \cdots \]
\end{notation}

We prove

\begin{theorem}
  \label{thm:persistence-in-terms-of-spectral}For each, $n, s, t \geq 0$, $s
  \leq t $,
  \begin{eqnarray*}
    b_n^{s, t} (\mathcal{F}) & = & \sum_{0 \leq i \leq s} \dim_k (E_{i,
    n-i}^{(\max (i, t - i + 1))} (\mathcal{F}_{\leq t})) .
  \end{eqnarray*}
\end{theorem}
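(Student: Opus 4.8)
The plan is to reduce Theorem~\ref{thm:persistence-in-terms-of-spectral} to
Theorem~\ref{thm:spectral-to-persistent} applied to the truncated filtration
$\mathcal{F}_{\leq t}$. The key point is that the persistent Betti numbers of a
truncation stabilize: for any truncated filtration $\mathcal{F}_{\leq t}$, once
the upper index reaches $t$ the space no longer changes, so
$b_n^{i, j}(\mathcal{F}_{\leq t}) = b_n^{i, t}(\mathcal{F}_{\leq t})$ for all
$j \geq t$, and moreover $b_n^{i, j}(\mathcal{F}_{\leq t}) = b_n^{i, j}(\mathcal{F})$
for all $i \leq j \leq t$. First I would record these two compatibility facts
directly from the definitions of the persistent homology groups in
Section~\ref{sec:persistent}, since both are immediate: the maps
$H_n(X_j) \to H_n(X_{j'})$ and the images thereof are literally the same in
$\mathcal{F}$ and $\mathcal{F}_{\leq t}$ as long as all indices involved are
$\leq t$, and they become identities once indices exceed $t$.

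Next I would specialize Theorem~\ref{thm:spectral-to-persistent}, applied to
$\mathcal{F}_{\leq t}$, to the particular value $r = \max(i, t - i + 1)$. The
choice of this $r$ is engineered precisely so that the second parenthesized
group in the formula of Theorem~\ref{thm:spectral-to-persistent} dies: we have
$i + r - 1 \geq t$, hence the ``upper'' indices $i + r - 1$ in the first pair of
terms are $\geq t$, and (using the stabilization above)
$b_n^{i, i+r-1}(\mathcal{F}_{\leq t}) = b_n^{i, t}(\mathcal{F}_{\leq t}) = b_n^{i,t}(\mathcal{F})$
while $b_n^{i-1, i+r-1}(\mathcal{F}_{\leq t}) = b_n^{i-1, t}(\mathcal{F})$.
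For the second pair of terms, involving $b_{n-1}^{i-r, i-1}$ and $b_{n-1}^{i-r, i}$,
the lower index $i - r \leq \min(0, 2i - t - 1) \leq 0$, so this index refers to
the empty space $X_0 = \emptyset$, all the relevant homology groups vanish, and
this contribution is zero. Thus for each $i$ with $0 \leq i \leq s$,
\begin{eqnarray*}
  \dim_k E_{n, i}^{(\max(i, t-i+1))}(\mathcal{F}_{\leq t}) & = & b_n^{i, t}(\mathcal{F}) - b_n^{i-1, t}(\mathcal{F}).
\end{eqnarray*}

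Finally I would sum this identity over $0 \leq i \leq s$. The right-hand side
telescopes: $\sum_{0 \leq i \leq s}\bigl(b_n^{i,t}(\mathcal{F}) - b_n^{i-1,t}(\mathcal{F})\bigr) = b_n^{s,t}(\mathcal{F}) - b_n^{-1,t}(\mathcal{F}) = b_n^{s,t}(\mathcal{F})$,
using $b_n^{-1, t}(\mathcal{F}) = 0$ since $X_{-1} = \emptyset$. This yields
exactly the claimed formula.

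The steps are all routine once the setup is in place; the one genuine thing to
get right is the bookkeeping on indices — checking that $r = \max(i, t-i+1)$
simultaneously forces $i + r - 1 \geq t$ and $i - r \leq 0$ across the whole
range $0 \leq i \leq s \leq t$, and confirming that the stabilization lemma for
$b_n^{i,j}(\mathcal{F}_{\leq t})$ applies in each place it is invoked. I expect
the main (very mild) obstacle to be verifying the boundary cases, e.g.\ $i = 0$,
where the first pair of terms reads $b_n^{0,t}(\mathcal{F}) - b_n^{-1,t}(\mathcal{F}) = b_n^{0,t}(\mathcal{F})$,
and making sure the degenerate spectral sequence groups (those with a
non-positive index) are correctly interpreted as zero so that
Theorem~\ref{thm:spectral-to-persistent} is being applied within its stated
range $n, s \geq 0$, $r > 0$.
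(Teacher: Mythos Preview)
Your proof is correct, and it arrives at the same key identity as the paper,
\[
\dim_k E^{(\max(i, t-i+1))}_{n,i}(\mathcal{F}_{\leq t}) \;=\; b_n^{i,t}(\mathcal{F}) - b_n^{i-1,t}(\mathcal{F}),
\]
followed by a telescoping sum. The route to this identity differs, however. The paper invokes the convergence theorem (Theorem~\ref{thm:spectral-convergence}) to identify $E^{\infty}_{n,i}(\mathcal{F}_{\leq t})$ with the graded piece $\operatorname{Gr}_{n,i}(H_n(X_t))$, and then uses Remark~\ref{rem:infty} (with $N=t$) to name the page at which the $E^{(r)}$ stabilize. You instead plug $r=\max(i,t-i+1)$ into Theorem~\ref{thm:spectral-to-persistent} for $\mathcal{F}_{\leq t}$ and verify by hand that this choice kills the $(n{-}1)$-terms (since $i-r\leq 0$) and stabilizes the $n$-terms (since $i+r-1\geq t$). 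Your approach is a bit more self-contained, staying entirely within the explicit dimension formula of Theorem~\ref{thm:spectral-to-persistent} and avoiding the appeal to convergence as a black box; the paper's approach is more conceptual, making clear that the identity is nothing more than ``$E^\infty$ computes the associated graded of $H_n(X_t)$.'' In fact, the unnumbered remark immediately after the proof of Theorem~\ref{thm:spectral-to-persistent} already carries out exactly your computation in the untruncated case, so your argument is very much in the spirit of the paper.
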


\begin{remark}
\label{rem:CW}
Even though we state our results for filtrations of CW-complexes for clarity, the results and
their proofs are valid for filtrations of abstract finite dimensional complexes of 
$k$-vector spaces as well.
\end{remark}

The rest of the paper is organized  as follows. In \S
\ref{sec:persistent}, we recall the definitions of  persistent homology
groups of a filtration and the persistent Betti numbers. In \S
\ref{sec:exact-couples}, we recall the definitions of exact couples and prove
their basic properties. In \S \ref{sec:spectral}, we recall how to
associate an exact couple to a filtration via the long exact homology sequence
of a pair. We also establish the relationships between the groups in the
derived couples and the persistent homology groups of the same filtration.
This allows us to prove (rather easily) the main theorems in \S
\ref{sec:proofs}.

We do not assume any prior knowledge on spectral sequences and only assume
basic linear and only a slight familiarity with homological algebra as
prerequisites.

\section{Persistent Homology}\label{sec:persistent}

We now recall the precise definition of the persistent homology
groups associated to a filtration
\cite{Edelsbrunner-Harer2010,Weinberger2011}.

Let $\mathcal{F}$ denote the filtration of spaces $\emptyset = \cdots = X_{-
1} = X_0 \subset X_1 \subset \cdots \subset X_s \subset X_{s + 1} \subset
\cdots \subset X_N = X_{N + 1} = \cdots = X$ of spaces as in the last section.

\begin{notation}
\label{not:inclusion}
  For $
  %%0 \leq 
  s \leq t 
  %%\leq N
  $, we let $i_n^{s, t} : H_n (X_s) \longrightarrow
  H_n (X_t)$, denote the \ homomorphism induced by the inclusion $X_s
  \hookrightarrow X_t$.
\end{notation}

With the same notation as in the previous section we define:

\begin{definition}
\label{def:persistent}
  {\cite{Edelsbrunner-Harer2010}} For each triple $(n, s, t)$ with 
  $
  %%0 \leq 
  s \leq t 
  %%\leq N
  $ the corresponding  \emph{persistent homology
  group}, $H_n^{s, t} (\mathcal{F})$ is defined by
  \begin{eqnarray*}
    H_n^{s, t} (\mathcal{F}) & = & \ensuremath{\operatorname{Im}} (i_n^{s, t}).
  \end{eqnarray*}
  Note that $H_n^{s, t} (\mathcal{F}) \subset H_n (X_t)$, and $H_n^{s, s}
  (\mathcal{F}) = H_n (X_s)$.
\end{definition}

\begin{notation}
  We denote by ${b_n^{s, t} (\mathcal{F}) = \dim_k H_n^{s, t}
  (\mathcal{F})}$.
\end{notation}

\section{Algebra of exact Sequences and couples}\label{sec:exact-couples}

There are several ways of defining the homology spectral sequence associated
to a filtration $\mathcal{F}$. As mentioned in the introduction we prefer the
approach due to Massey \cite{Massey52} (see also {\cite{Mcleary}}) using
\emph{exact couples} (defined below) since it avoids defining various
intermediate groups of cycles and boundaries and clarifies at a top level the
close relationship between the groups appearing in the spectral sequence and
the persistent homology groups. The exact relationships between the dimensions
of the groups in the spectral sequence and the persistent Betti numbers can
then be read off with minimal extra effort.

We begin by recalling a few basic notions.

Recall that a sequence of linear maps between $k$-vector spaces
\[ \cdots \longrightarrow V_{i + 1} \mathop{\longrightarrow}\limits^{f_{i +
   1}} V_i \mathop{\longrightarrow}\limits^{f_i} V_{i - 1}
   \mathop{\longrightarrow}\limits^{f_{i - 1}} V_{i - 2}
   \mathop{\longrightarrow}\limits^{f_{i - 2}} \cdots \]
is said to be \emph{exact} if for each $i$, $\ker f_i
=\ensuremath{\operatorname{Im}}f_{i + 1}$.

The following lemma is an easy consequence of the exactness property.

\begin{lemma}
  \label{lem:exact} Given an exact sequence
  \[ V_2 \mathop{\longrightarrow}\limits^{f_2} V_1
     \mathop{\longrightarrow}\limits^{f_1} V_0
     \mathop{\longrightarrow}\limits^{f_0} V_{- 1}
     \mathop{\longrightarrow}\limits^{f_{- 1}} V_{- 2} \]
  where each $V_i$ is a finite dimensional $k$-vector space
  \begin{eqnarray*}
    \dim_k (V_0) & = & (\dim_k (V_1) - \dim_k  (\ensuremath{\operatorname{Im}}
    (f_2)))   + (\dim_k (V_{- 1}) - \dim_k (\ensuremath{\operatorname{Im}}
    (f_{- 1}))) .
  \end{eqnarray*}
\end{lemma}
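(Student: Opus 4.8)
The plan is to exploit exactness at the two ``inner'' spots $V_1$ and $V_0$ of the given five-term sequence, and to bookkeep dimensions using the rank–nullity theorem. First I would split the exact sequence at $V_0$: exactness at $V_0$ says $\ker f_0 = \operatorname{Im} f_1$, so rank–nullity applied to $f_0 : V_0 \to V_{-1}$ gives $\dim_k(V_0) = \dim_k(\operatorname{Im} f_1) + \dim_k(\ker f_0) = \dim_k(\operatorname{Im} f_1) + \dim_k(\operatorname{Im} f_1')$, wait — more carefully, $\dim_k(V_0) = \dim_k(\ker f_0) + \dim_k(\operatorname{Im} f_0)$, and $\dim_k(\ker f_0) = \dim_k(\operatorname{Im} f_1)$. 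So the first task is to compute $\dim_k(\operatorname{Im} f_1)$ and $\dim_k(\operatorname{Im} f_0)$ separately in terms of the quantities appearing on the right-hand side of the claimed formula.

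For $\dim_k(\operatorname{Im} f_1)$, I would apply rank–nullity to $f_1 : V_1 \to V_0$: $\dim_k(\operatorname{Im} f_1) = \dim_k(V_1) - \dim_k(\ker f_1)$, and then use exactness at $V_1$, namely $\ker f_1 = \operatorname{Im} f_2$, to rewrite this as $\dim_k(V_1) - \dim_k(\operatorname{Im} f_2)$. That produces the first parenthesized term in the statement. For $\dim_k(\operatorname{Im} f_0)$, I would observe that $\operatorname{Im} f_0 \subseteq \ker f_{-1}$ by exactness at $V_{-1}$ (here exactness at $V_{-1}$ reads $\ker f_{-1} = \operatorname{Im} f_0$, so in fact equality holds), hence $\dim_k(\operatorname{Im} f_0) = \dim_k(\ker f_{-1}) = \dim_k(V_{-1}) - \dim_k(\operatorname{Im} f_{-1})$ by rank–nullity applied to $f_{-1} : V_{-1} \to V_{-2}$. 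That produces the second parenthesized term. Adding the two expressions for $\dim_k(\operatorname{Im} f_1)$ and $\dim_k(\operatorname{Im} f_0)$ yields the asserted identity.

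I do not anticipate a genuine obstacle here: the only subtlety worth flagging is making sure exactness is invoked at exactly the three interior nodes where it is available ($V_1$, $V_0$, $V_{-1}$) and not at the endpoints $V_2$ or $V_{-2}$, where no exactness hypothesis is given — indeed the formula only involves $\operatorname{Im} f_2$ and $\operatorname{Im} f_{-1}$ precisely because those are the pieces that cannot be simplified further without boundary information. A clean way to present this is to note that the short exact sequence $0 \to \operatorname{Im} f_1 \to V_0 \to \operatorname{Im} f_0 \to 0$ (which is exactly the content of exactness at $V_0$ plus the definition of $\operatorname{Im} f_0$) already gives $\dim_k(V_0) = \dim_k(\operatorname{Im} f_1) + \dim_k(\operatorname{Im} f_0)$ over a field, and then substitute the two computations above. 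The whole argument is three applications of rank–nullity plus the definition of exactness, so it is essentially immediate.
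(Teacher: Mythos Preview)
Your proposal is correct and follows essentially the same route as the paper: rank--nullity for $f_0$ to split $\dim_k V_0 = \dim_k\ker f_0 + \dim_k\operatorname{Im} f_0$, then exactness at $V_0$, $V_1$, and $V_{-1}$ together with rank--nullity for $f_1$ and $f_{-1}$ to rewrite each summand. The only difference is presentational---your short exact sequence $0 \to \operatorname{Im} f_1 \to V_0 \to \operatorname{Im} f_0 \to 0$ is just a repackaging of the same step.
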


\begin{proof}
The lemma follows from the following sequence of
inequalities of which the first one is from basic linear algebra, and the
remaining consequences of the exactness of the given sequence.
\begin{eqnarray*}
     \dim_k (V_0) & = & \dim_k (\ker (f_0)) + \dim_k (\tmop{Im} (f_0))\\
     & = & \dim_k (\tmop{Im} (f_1)) + \dim_k (\ker (f_{- 1}))\\
     & = & (\dim_k (V_1) - \dim_k  (\ker (f_1))) \noplus \noplus + (\dim_k
     (V_{- 1}) - \dim_k (\tmop{Im} (f_{- 1})))\\
     & = & (\dim_k (V_1) - \dim_k  (\tmop{Im} (f_2))) \noplus \noplus +
     (\dim_k (V_{- 1}) - \dim_k (\tmop{Im} (f_{- 1}))).
   \end{eqnarray*}
\end{proof}

%%\hspace*{\fill}$\Box$\medskip

We now define following Massey {\cite{Massey52}} (see also {\cite[page 37]{Mcleary}}):

\begin{definition}
\label{def:exact-couple}
  An \emph{exact couple}, $\mathcal{C}$, consists of two finite
  dimensional $k$-vector spaces $E, D$ and linear maps $\partial, i$ and $j$
  such that the following triangular diagram is exact at each vertex.
  
\[
\xymatrix{
&E \ar[rd]^\partial& \\
D\ar[ur]^j && D\ar[ll]^i 
}
\]
 
\end{definition}

Given an exact couple
\begin{eqnarray*}
\mathcal{C}  &=&
 \xymatrix{
&E \ar[rd]^\partial& \\
D\ar[ur]^j&& D\ar[ll]^i 
}
\end{eqnarray*}

the exactness at each vertex implies that the map $d = j \circ \partial$ is a
\emph{differential} i.e. $d^2 = j \circ \partial \circ j \circ \partial = j
\circ (\partial \circ j) \circ \partial = 0$.

Let $E' = H (E, d) = \ker d /\ensuremath{\operatorname{Im}}d$, and $D'
=\ensuremath{\operatorname{Im}}i$.

Then, there exists {{\em induced linear maps\/}} $\partial' : E'
\longrightarrow D'$, $i' : D' \longrightarrow D'$, $j' : D' \longrightarrow
E'$ \ defined as follows.

First notice that for any element $x + d E \in E'$, where $x \in \ker (d)$, \
we have by the exactness at $E$ of the couple \ $\mathcal{C}$, that $\partial
(x) \in \ensuremath{\operatorname{Im}} (i)$ (since $x \in \ker (d)$ is
equivalent to 
$j \circ \partial (x) = 0$,
%%sb changes
 which implies that $\partial (x) \in
\ker (j) =\ensuremath{\operatorname{Im}} (i)$).

Now define
\begin{eqnarray}
\nonumber
  \partial' (x + d E) & = & \partial (x) \in \ensuremath{\operatorname{Im}}
  (i) = D', \ensuremath{\operatorname{for}}\ensuremath{\operatorname{all}}x
  \in \ker (d), \\
  \label{eqn:defnofderivedcouple}
  i' & = & i \mid_{\ensuremath{\operatorname{Im}}i}, \\
  \nonumber
  j' (i (x)) & = & j (x) + d E,
  \ensuremath{\operatorname{for}}\ensuremath{\operatorname{all}}i (x) \in
  \ensuremath{\operatorname{Im}}i. 
\end{eqnarray}
It follows easily from these definitions that the $k$-vector spaces $E', D'$,
and the linear maps $\partial', i', j'$ also form an exact couple. In other
words the following diagram is exact.
\[
\xymatrix{
&E' \ar[rd]^{\partial'}& \\
D'\ar[ur]^{j'}&& D'\ar[ll]^{i'} 
}
\]

\begin{definition}
  \label{def:derived} The exact couple
  \begin{eqnarray*}
    \mathcal{C}' & = & 
    \xymatrix{
&E' \ar[rd]^{\partial'}& \\
D'\ar[ur]^{j'}&& D'\ar[ll]^{i'} 
}
  \end{eqnarray*}
  is called the (first) \emph{derived couple} of the
  exact couple $\mathcal{C}$.
\end{definition}

\begin{notation}
  Given an exact couple $\mathcal{C}=\mathcal{C}^{(1)}$, we denote for
  each $r \geq 1$, $\mathcal{C}^{(r + 1)} = (\mathcal{C}^{(r)})' .$
\end{notation}

\section{Homology Spectral sequence associated to a
filtration}\label{sec:spectral}

We now use the notion of an exact couple introduced in the last section to
define the homology spectral sequence of a filtration $\mathcal{F}$ given by
$\emptyset = \cdots = X_{- 1} = X_0 \subset X_1 \subset \cdots \subset X_p
\subset X_{p + 1} \subset \cdots \subset X_N = X_{N + 1} = \cdots = X$ \ where
each $X_i$ is a finite CW-complex.

\begin{notation}
  For 
  %%$0 \leq s \leq N$
  $p \leq N$,
  denoting $F_p H_n (X) =\ensuremath{\operatorname{Im}}
  (i_n^{p, N})$ we have a filtration of the vector space $H_n (X) = H_n (X,
  k)$ given by
  
  \[ 0 = F_0 H_n (X) \subset F_1 H_n (X) \subset \cdots F_p H_n (X) \subset
     F_{p + 1} H_n (X) \subset \cdots \subset F_N H_n (X) = H_n (X) \]
\end{notation}

Defining
\begin{eqnarray}
  \ensuremath{\operatorname{Gr}}_{p} (H_n (X)) & = & F_p H_n (X) / F_{p -
  1} H_n (X), \nonumber\\
  & = & \ensuremath{\operatorname{Im}} (i_n^{p, N})
  /\ensuremath{\operatorname{Im}} (i_n^{p - 1, N})  \label{eqn:graded}
\end{eqnarray}
we have
\begin{eqnarray*}
  \dim_k H_n (X) & = & \sum_{p \geq 0} \dim_k
  \ensuremath{\operatorname{Gr}}_{p} (H_n (X)) .
\end{eqnarray*}
Recall also the homology exact sequence of the pair $(X_p, X_{p - 1})$. This is the long exact sequence
in homology induced from the short exact sequence
\[
0\rightarrow C_\bullet(X_{p-1}) \xrightarrow{i_{p-1,\bullet}} C_\bullet(X_p) \xrightarrow{j_{p,\bullet}} C_\bullet(X_p,X_{p-1}) \rightarrow 0
\]
of chain complexes (see for example \cite[\S 2, No. 3, Th\'eor\`eme 1]{Bourbaki-Chapter10}). 

The induced long exact sequence in homology is the following  (using the same notation for the induced 
homomorphisms as in 
\cite[\S 2, No. 3, Th\'eor\`eme 1]{Bourbaki-Chapter10}):

$$\displaylines{
 \cdots \rightarrow H_n (X_{p- 1}) \xrightarrow{H_n(i_{p-1,\bullet})} H_n 
   (X_p) \xrightarrow{H_n(j_{p,\bullet})} H_n (X_p, X_{p - 1}) \cr
   \xrightarrow{H_n(i_{p,\bullet},j_{p,\bullet})} H_{n - 1} (X_{p - 1})
   \rightarrow \cdots 
   }
$$

We now denote
\begin{eqnarray}
  E^{(1)}_{p,n-p} (\mathcal{F}) & = & H_n (X_p, X_{p - 1}),  \label{eqn:E1}\\
  D^{(1)}_{p, n-p} (\mathcal{F}) & = & H_n (X_p),  \label{eqn:D1} \\
  \nonumber 
  i_{p,n-p} &=& H_n(i_{p,\bullet}), \\ \nonumber
  j_{p,n-p} &=& H_n(j_{p,\bullet}), \\ \nonumber
  \partial_{p,n-p} &=& H_n(i_{p,\bullet},j_{p,\bullet}). 
\end{eqnarray}

We now bundle together the various $E^{(1)}_{p, n-p} (\mathcal{F})$ (respectively,
$D^{(1)}_{p, n-p} (\mathcal{F})$) into one bigraded vector space $E^{(1)}
(\mathcal{F})$ (respectively, $D^{(1)} (\mathcal{F})$) by defining
\begin{eqnarray*}
  E^{(1)} (\mathcal{F}) & = & \bigoplus_{p,q} E^{(1)}_{p,q} (\mathcal{F}),\\
  D^{(1)} (\mathcal{F}) & = & \bigoplus_{p,q} D^{(1)}_{p,q} (\mathcal{F}) .
\end{eqnarray*}
Note that the $k$-vector spaces $E^{(1)} (\mathcal{F})$ and $D^{(1)} (\mathcal{F})$ \
are \emph{bigraded}  and each of them is a direct sum of homogeneous subspaces
indexed by the pairs $(p,	q)$.  We refer the reader to
{\cite[Chapter 2, \S 11.2)]{Bourbaki-algebra}}  for background on
graded vector spaces and graded homomorphisms (linear maps) between them.

In particular, a \emph{bigraded homomorphism}  $\phi : \bigoplus_{p,q} V_{p,q}
\longrightarrow \bigoplus_{p,q} W_{p,q}$ between two bigraded vector spaces
is said to have \emph{bidegree} $(d,d')$ if $\phi (V_{p,q}) \subset W_{p + d, q +
d'}$.

We denote by $\partial$ (respectively, $i, j$) the bigraded linear maps
$\bigoplus_{p,q} \partial_{p, q}$ (respectively, $\bigoplus_{p, q} i_{p,q},
\bigoplus_{p,q} j_{p,q}$), 
%%sb changes
and get an exact couple
\begin{eqnarray*}
  \mathcal{C} (\mathcal{F}) & = & 
  \xymatrix{
&E^{(1)}(\mathcal{F}) \ar[rd]^{\partial}& \\
D^{(1)}(\mathcal{F})\ar[ur]^{j}&& D^{(1)}(\mathcal{F})\ar[ll]^{i} 
}.
\end{eqnarray*}
Note that the bidegrees of the various linear maps $\partial$, $i,
p$, can be read off from the subscripts of the source and targets of the following homomorphisms.
\[ \partial_{p,q} : E^{(1)}_{p,q} (\mathcal{F}) \longrightarrow D^{(1)}_{p - 1, q} (\mathcal{F}), \]
\[ i_{p,q} : D^{(1)}_{p,q} (\mathcal{F}) \longrightarrow D^{(1)}_{p+1, q-1}
   (\mathcal{F}), \]
\[ j_{p,q}: D^{(1)}_{p,q} (\mathcal{F}) \longrightarrow E^{(1)}_{p,q} (\mathcal{F}) .
\]

It is evident from the above that:
\begin{eqnarray}
\nonumber
  \mathrm{bidegree} (\partial) & = & (- 1, 0),\\
 \label{eqn:bidegree-1} 
  \mathrm{bidegree} (i) & = & (1, -1),\\
\nonumber
  \mathrm{bidegree} (j) & = & (0, 0) .
\end{eqnarray}
On deriving the exact couple $\mathcal{C} (\mathcal{F})$, $r$ times,  we obtain
for each $r \geq 1$, the couple
\begin{eqnarray*}
  \mathcal{C}^{(r)} (\mathcal{F}) & = & 
  \xymatrix{
&E^{(r)}(\mathcal{F}) \ar[rd]^{\partial^{(r)}}& \\
D^{(r)}(\mathcal{F})\ar[ur]^{j^{(r)}}&& D^{(r)}(\mathcal{F})\ar[ll]^{i^{(r)}} 
}
    .
\end{eqnarray*}
\begin{definition}
\label{def:spectral-sequence}
  The sequence of pairs $(E^{(r)} (\mathcal{F}), d^{(r)}
  (\mathcal{F}))_{r \geq 1}$ is called the (homology) \emph{spectral
  sequence associated to the filtration $\mathcal{F}$}.
\end{definition}

Notice that  
\[
E^{(r + 1)} (\mathcal{F}) = (E^{(r)} (\mathcal{F}))' = H (E^{(r)}
(\mathcal{F}), d^{(r)}),
\] 
\[
D^{(r + 1)} (\mathcal{F}) = (D^{(r)}
(\mathcal{F}))' = i^{(r)} (D^{(r)} (\mathcal{F})),
\]
and both $E^{(r+1)}$ and $D^{(r+1)}$ inherit a bigrading from $E^{(r)}$ and $D^{(r)}$.
We index the homogeneous components of  these bigradings 
such that for each pair $(p, q)$:
\begin{enumerate}
  \item $E^{(r + 1)}_{p,q} (\mathcal{F})$ is a subquotient (i.e. quotient
  of a subspace) of $E^{(r)}_{p,q} (\mathcal{F})$, and
   \item 
   $D^{(r + 1)}_{p,q} (\mathcal{F})$ is a subspace of $D^{(r)}_{p,q} (\mathcal{F})$.
\end{enumerate}

It then follows from \eqref{eqn:defnofderivedcouple} that:
\begin{eqnarray}
\nonumber
\bidegree(\partial^{(r+1)}) &=& \bidegree(\partial^{(r)}), \\
\label{eqn:bidegree-r}
\bidegree(i^{(r+1)})  &=& \bidegree(i^{(r)}), \\
\nonumber
\bidegree(j^{(r+1)}) &=& \bidegree(j^{(r)}) - \bidegree(i^{(r)}).				
\end{eqnarray}

Finally, it follows by induction from \eqref{eqn:bidegree-1} and \eqref{eqn:bidegree-r} that:

\begin{eqnarray*}
  \mathrm{bidegree} (\partial^{(r)}) & = &
%%  (- 1, - r),
(-1,0),
\\
  \mathrm{bidegree} (i^{(r)}) & = & (1, -1),\\
  \mathrm{bidegree} (j^{(r)}) & = & 
  %%(0, 0) .
  (-r+1,r-1).
\end{eqnarray*}
The  domains and codomains of $\partial^{(r)}_{p,q}, i^{(r)}_{p,q},j^{(r)}_{p,q}$ are as follows:
\begin{eqnarray}
\partial^{(r)}_{p,q} : E^{(r)}_{p,q} (\mathcal{F}) &\longrightarrow&
%%D^{(r)}_{n - 1, s - r} (\mathcal{F}), 
D^{(r)}_{p-1, q} (\mathcal{F}),  \nonumber  \\
%%sb changes
 i^{(r)}_{p,q} : D^{(r)}_{p,q} (\mathcal{F}) &\longrightarrow&
  D^{(r)}_{p+1, q-1} (\mathcal{F}), \label{eqn:Dr} \\
j^{(r)}_{p,q} : D^{(r)}_{p,q} (\mathcal{F}) &\longrightarrow& 
%%sb changes
%%E^{(r)}_{n,s} (\mathcal{F}) 
E^{(r)}_{p-r+1,q+r-1} (\mathcal{F}) 
\nonumber.
\end{eqnarray}

It then follows that
\begin{eqnarray*}
  d^{(r)}_{p,q} : E^{(r)}_{p,q} (\mathcal{F}) \longrightarrow E^{(r)}_{p-r, q+r-1} (\mathcal{F}), &  & 
\end{eqnarray*}
and thus,
\begin{eqnarray*}
  \mathrm{bidegree} (d^{(r)}) & = & (- r,r-1) .
\end{eqnarray*}
\begin{theorem}
  \label{thm:spectral-convergence}The spectral sequence defined above
  converges after a finite number of terms with
  \begin{eqnarray*}
    E_{p,q}^{\infty} (\mathcal{F}) & \cong &
    \ensuremath{\operatorname{Gr}}_{p} (H_{p+q} (X)) .
  \end{eqnarray*}
\end{theorem}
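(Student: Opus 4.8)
The plan is to use finiteness of the filtration to obtain convergence essentially for free, and then to read the limit term directly off the derived exact couples: for large $r$ the couple $\mathcal{C}^{(r)}(\mathcal{F})$ degenerates into short exact sequences relating $E^{(r)}_{n,s}(\mathcal{F})$ to two consecutive steps of the filtration $F_\bullet H_n(X)$. First I would dispose of convergence. Since $X_s=\emptyset$ for $s\le 0$ and $X_s=X$ for $s\ge N$, the group $E^{(1)}_{n,s}(\mathcal{F})=H_n(X_s,X_{s-1})$ vanishes unless $1\le s\le N$, hence so does every subquotient $E^{(r)}_{n,s}(\mathcal{F})$. As $d^{(r)}_{n,s}\colon E^{(r)}_{n,s}(\mathcal{F})\to E^{(r)}_{n-1,s-r}(\mathcal{F})$ has nonzero source only for $1\le s\le N$ and nonzero target only for $1\le s-r\le N$, it vanishes identically once $r>N$; therefore $E^{(r+1)}_{n,s}(\mathcal{F})=E^{(r)}_{n,s}(\mathcal{F})$ for $r>N$, which is precisely the assertion that the spectral sequence converges after a finite number of terms, and $E^{\infty}_{n,s}(\mathcal{F})$ is this stable value.

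Next I would identify the $D$-terms. From $D'=\operatorname{Im} i$, $i'=i\mid_{\operatorname{Im} i}$ (Definition \ref{def:derived}) and the bidegree $(0,1)$ of $i$, a straightforward induction on $r$ shows that, realized inside $D^{(1)}_{n,s+r-1}(\mathcal{F})=H_n(X_{s+r-1})$ via $r-1$ applications of $i$, one has $D^{(r)}_{n,s}(\mathcal{F})=\operatorname{Im}\bigl(i_n^{s,s+r-1}\bigr)$, and $i^{(r)}_{n,s}$ is the restriction of the inclusion-induced map $i_n^{s+r-1,s+r}$. Now fix $(n,s)$ and take $r>N$. If $s\le 0$ then $D^{(r)}_{n,s}(\mathcal{F})=0=F_0 H_n(X)$; if $s>N$ then $E^{(r)}_{n,s}(\mathcal{F})=0=\operatorname{Gr}_{n,s}(H_n(X))$ and there is nothing to prove; and if $0<s\le N$ then $s+r-1\ge N$ and $X_t=X$ for $t\ge N$, so $D^{(r)}_{n,s}(\mathcal{F})=\operatorname{Im}(i_n^{s,N})=F_s H_n(X)$, while $i^{(r)}_{n,s}\colon F_s H_n(X)\hookrightarrow F_{s+1} H_n(X)$ becomes the inclusion, being the restriction of $i_n^{s+r-1,s+r}=\operatorname{id}_{H_n(X)}$.

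Finally I would read off the limit using exactness of $\mathcal{C}^{(r)}(\mathcal{F})$ for $r>N$. Since $D^{(r)}_{n-1,s-r}(\mathcal{F})=0$ we get $\partial^{(r)}_{n,s}=0$, so exactness at $E^{(r)}_{n,s}(\mathcal{F})$ forces $p^{(r)}_{n,s}\colon D^{(r)}_{n,s}(\mathcal{F})\to E^{(r)}_{n,s}(\mathcal{F})$ to be surjective, while exactness at the copy of $D^{(r)}(\mathcal{F})$ that is the source of $p^{(r)}$ gives $\ker p^{(r)}_{n,s}=\operatorname{Im} i^{(r)}_{n,s-1}=F_{s-1} H_n(X)$. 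Hence $E^{(r)}_{n,s}(\mathcal{F})\cong D^{(r)}_{n,s}(\mathcal{F})/\ker p^{(r)}_{n,s}=F_s H_n(X)/F_{s-1}H_n(X)=\operatorname{Gr}_{n,s}(H_n(X))$, which together with the convergence step proves the theorem; this last computation is just the exact-couple incarnation of Lemma \ref{lem:exact}.

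The only genuinely delicate part is the index bookkeeping in the second paragraph: one must check that, with the grading convention fixed in Section \ref{sec:spectral} (whereby $D^{(r+1)}_{n,s}$ sits inside $D^{(r)}_{n,s+1}$), the space $D^{(r)}_{n,s}(\mathcal{F})$ is honestly the subspace $\operatorname{Im}(i_n^{s,s+r-1})$ of $H_n(X_{s+r-1})$ and that $i^{(r)}$ is literally the restriction of a map induced by an inclusion of complexes, and then to dispatch correctly the boundary cases $s\le 0$ and $s>N$. Once that is nailed down, everything else follows immediately from the exactness of the derived couples.
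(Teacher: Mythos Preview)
Your proof is correct. The paper's own proof handles the convergence step the same way you do (finitely many nonzero $E^{(1)}_{n,s}$ forces $d^{(r)}=0$ for large $r$), but then simply declares the identification $E^{\infty}_{n,s}\cong\operatorname{Gr}_{n,s}(H_n(X))$ to be ``a standard result'' and cites McCleary's book. You instead supply a self-contained argument using the derived exact couples: identify $D^{(r)}_{n,s}$ with $\operatorname{Im}(i_n^{s,s+r-1})$, observe that for $r>N$ this becomes $F_sH_n(X)$ with $i^{(r)}$ the inclusion and $\partial^{(r)}=0$, and then read off the short exact sequence $0\to F_{s-1}H_n(X)\to F_sH_n(X)\to E^{(r)}_{n,s}\to 0$ from exactness of the couple. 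This is exactly the exact-couple proof of the standard convergence theorem, and it fits the paper's framework better than a citation does. Two minor remarks: your identification of $D^{(r)}_{n,s}$ is precisely Lemma~\ref{lem:basic}, which in the paper is stated \emph{after} this theorem, so it is good that you reprove it rather than cite it; and your closing sentence invoking Lemma~\ref{lem:exact} is a slight overreach, since that lemma concerns dimensions while you are actually producing an isomorphism directly from the first isomorphism theorem---the appeal is unnecessary.
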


\begin{proof}
Since clearly the groups $E^{(1)}_{p,q} (\mathcal{F})
%%sb changes
= 0$  for all but finite number of pairs $(p,q)$, it is clear that
$d^{(r)}_{p,q} = 0$ for $r$ large enough, and this implies that the spectral
sequence $(E^{(r)} (\mathcal{F}), d^{(r)}
(\mathcal{F}))_{r \geq 1}$ converges unconditionally. The remaining part is a 
standard result (see for example \cite[Chapter 2]{Mcleary}).
\end{proof}

\begin{remark}
  \label{rem:infty}Notice that since in the filtration $\mathcal{F}$, $X_i =
  \emptyset$ for $i \leq 0$, we have that $d^{(r)}_{p,q} = 0$ for all $r \geq
  p$.
  %%sb changes
  Also, it follows from (\ref{eqn:E1}) that $E^{(1)}_{p,q} (\mathcal{F}) = 0$
  for all $p > N$. This implies that $E^{(r)}_{p,q} (\mathcal{F}) = 0$
  for all $p > N$ and $r \geq 1$. Thus,  $d^{(r)}_{p,n-p}$ and 
  %%$d^{(r)}_{p-r, n-p+r}$ 
  $d^{(r)}_{p+r, n-p-r+1}$ 
  are both $0$ for $r \geq \max (p, N - p + 1)$, and this implies that
  $E^{\infty}_{p,n-p} (\mathcal{F}) \cong E^{(\max (p, N - p + 1))}_{p, n-p}
  (\mathcal{F})$.
\end{remark}

The crucial link between the exact couples $\mathcal{C}^{(r)} (\mathcal{F})$
and the persistent homology groups is captured in the following observation.

\begin{lemma}
  \label{lem:basic}
  For 
  %%$r > 0, n, s \geq 0$
  $r,p,q \in \Z, r\geq 1$ and setting $n=p+q$,
  \begin{eqnarray*}
    %%D^{(r)}_{n, s} (\mathcal{F}) 
    D^{(r)}_{p-1, q+1} (\mathcal{F}) 
    & = & \ensuremath{\operatorname{Im}} (i_n^{p-r,
    p  - 1}) = H_n^{p-r, p  - 1} (\mathcal{F}),
  \end{eqnarray*}
  and
  \begin{eqnarray*}
 %%   i^{(r)}_{n, s} & = & i_n^{s + r - 1, s + r} \mid_{D^{(r)}_{n, s}(\mathcal{F})} .
  i^{(r)}_{p-1,q+1} & = & i_n^{p  - 1, p } \mid_{D^{(r)}_{p-1,q +1}(\mathcal{F})}.
  \end{eqnarray*}
\end{lemma}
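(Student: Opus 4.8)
The plan is to establish both identities simultaneously by induction on $r$, with the base case $r=1$ coming directly from the definitions in Section \ref{sec:spectral} and the inductive step being a matter of unwinding the construction of the derived couple together with the functoriality of inclusion-induced maps. Note first that the second equality in the statement, $\operatorname{Im}(i_n^{s,s+r-1}) = H_n^{s,s+r-1}(\mathcal{F})$, is nothing but the definition of the persistent homology group from Section \ref{sec:persistent}, so the real content is the first equality and the formula for $i^{(r)}_{n,s}$. For $r=1$: by \eqref{eqn:D1} we have $D^{(1)}_{n,s}(\mathcal{F}) = H_n(X_s)$, and since the inclusion $X_s\hookrightarrow X_s$ is the identity, $i_n^{s,s}$ is the identity of $H_n(X_s)$, whence $\operatorname{Im}(i_n^{s,s}) = H_n(X_s)$, which is the claimed formula for $r=1$; moreover $i^{(1)}_{n,s}$ is by construction the map $i_{n,s}\colon H_n(X_s)\to H_n(X_{s+1})$ from the long exact sequence of the pair $(X_{s+1},X_s)$, that is, $i_n^{s,s+1} = i_n^{s+1-1,\,s+1}$ restricted to all of $D^{(1)}_{n,s}(\mathcal{F})$.

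For the inductive step I would assume both identities for some $r\ge 1$ and recall from Section \ref{sec:spectral} that $D^{(r+1)}(\mathcal{F}) = i^{(r)}(D^{(r)}(\mathcal{F}))$, with the chosen bi-grading placing $D^{(r+1)}_{n,s}(\mathcal{F})$ inside $D^{(r)}_{n,s+1}(\mathcal{F})$; together with $\operatorname{bidegree}(i^{(r)}) = (0,1)$ this forces the identification $D^{(r+1)}_{n,s}(\mathcal{F}) = i^{(r)}_{n,s}\bigl(D^{(r)}_{n,s}(\mathcal{F})\bigr)$. Similarly, since $i^{(r+1)} = i^{(r)}\mid_{D^{(r+1)}(\mathcal{F})}$ by \eqref{eqn:i} and $D^{(r+1)}_{n,s}(\mathcal{F})\subseteq D^{(r)}_{n,s+1}(\mathcal{F})$, the map $i^{(r+1)}_{n,s}$ is the restriction of $i^{(r)}_{n,s+1}$ to $D^{(r+1)}_{n,s}(\mathcal{F})$. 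Substituting the inductive hypothesis $i^{(r)}_{n,s} = i_n^{s+r-1,s+r}\mid_{D^{(r)}_{n,s}(\mathcal{F})}$ and $D^{(r)}_{n,s}(\mathcal{F}) = \operatorname{Im}(i_n^{s,s+r-1})$ then gives
\[
 D^{(r+1)}_{n,s}(\mathcal{F}) = i_n^{s+r-1,s+r}\bigl(\operatorname{Im}(i_n^{s,s+r-1})\bigr) = \operatorname{Im}\bigl(i_n^{s+r-1,s+r}\circ i_n^{s,s+r-1}\bigr) = \operatorname{Im}(i_n^{s,s+r}),
\]
the last equality being the functoriality $i_n^{t,u}\circ i_n^{s,t} = i_n^{s,u}$; since $s+r = s+(r+1)-1$ this is the desired formula for $r+1$. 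Applying the inductive hypothesis instead to $i^{(r)}_{n,s+1} = i_n^{s+r,\,s+r+1}\mid_{D^{(r)}_{n,s+1}(\mathcal{F})}$ yields $i^{(r+1)}_{n,s} = i_n^{s+r,\,s+r+1}\mid_{D^{(r+1)}_{n,s}(\mathcal{F})}$, which is the second assertion for $r+1$ because $s+r = s+(r+1)-1$ and $s+r+1 = s+(r+1)$.

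The only genuinely delicate point — and the step I expect to be the main obstacle — is the index bookkeeping: one must keep straight that the paper's grading convention reindexes $D^{(r+1)}(\mathcal{F})$ so that $D^{(r+1)}_{n,s}(\mathcal{F})$ sits in the $(n,s+1)$-component of $D^{(r)}(\mathcal{F})$, and hence that it is $i^{(r)}_{n,s+1}$ (not $i^{(r)}_{n,s}$) whose restriction is $i^{(r+1)}_{n,s}$, and that the persistent shift $s\mapsto s+r-1$ propagates consistently through the composition above. Everything else is formal: this lemma uses only the definition of the derived couple and the functoriality of the maps induced by inclusions, and in particular does not require the exactness of the couple (which is needed only later, for the analysis of the $E$-terms).
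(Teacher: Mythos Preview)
Your proof is correct and follows essentially the same approach as the paper: induction on $r$, with the base case read off from \eqref{eqn:D1} and the inductive step obtained by unwinding the derived-couple construction together with the functoriality $i_n^{t,u}\circ i_n^{s,t}=i_n^{s,u}$. In fact your index bookkeeping is cleaner than the paper's own write-up, which contains a minor slip (it writes $D^{(r)}_{n,s}=i^{(r-1)}_{n,s-1}(D^{(r-1)}_{n,s-1})$ where, as you correctly argue from the stated grading convention, it should be $i^{(r-1)}_{n,s}(D^{(r-1)}_{n,s})$).
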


\begin{proof}
For $r = 1$, both claims follow directly from the
definition of $D^{(1)}_{p,q} (\mathcal{F})$ (see Eqn. \eqref{eqn:D1} above) and
the definition of the derived couple (see Eqn. \eqref{eqn:defnofderivedcouple}). For $r > 1$,
the claim follows by induction. 
First notice that 
$
%%D^{(r)}_{n, s} = i^{(r -1)}_{n, s - 1} (D^{(r - 1)}_{n, s - 1})
D^{(r)}_{p-1,q+1}
  = i^{(r -1)}_{p - 2,q+2} (D^{(r - 1)}_{p -2,q+2})$ (Eqn. \eqref{eqn:Dr}). By induction
hypothesis we have
\begin{eqnarray*}
     %%D^{(r - 1)}_{n, s} (\mathcal{F})
     D^{(r - 1)}_{p-2,q+2} (\mathcal{F})
       & = & \tmop{Im} (i_n^{p-r, p  - 2}) =
     H_n^{p-r, p  - 2} (\mathcal{F}),
   \end{eqnarray*} 
and
\begin{eqnarray*}
  i^{(r - 1)}_{p - 2,q+2} & = & i^{p  - 2, p  - 1}_n \mid_{\tmop{Im}
  (i^{p-r, p  - 2}_n)}.
\end{eqnarray*}

It then follows from the definition of the derived couple (Definition
\ref{def:derived}) that
\begin{eqnarray*}
%%  D^{(r)}_{n, s} (\mathcal{F}) 
D^{(r)}_{p-1,q+1} (\mathcal{F})
& = & \tmop{Im} (i_n^{p-r, p - 1}) = H_n^{p-r,
  p - 1} (\mathcal{F}) \nocomma,
\end{eqnarray*}
and
\begin{eqnarray*}
  %%   i^{(r)}_{n, s} & = & i_n^{s + r - 1, s + r} \mid_{D^{(r)}_{n, s}(\mathcal{F})} .
  i^{(r)}_{p-1,q+1} & = & i_n^{p  - 1, p } \mid_{D^{(r)}_{p-1,q+1}(\mathcal{F})}.
 \end{eqnarray*}
   \end{proof}

\begin{theorem}
  \label{thm:basic} For each $r \geq 1, n=p+q$, the dimensions of the groups $E^{(\ast)}_{\ast, \ast}
  (\mathcal{F})$ and the persistent homology groups $H_\ast^{\ast, \ast} (\mathcal{F})$
  are related by the following long exact sequence.

\[ \cdots \rightarrow H^{p, p + r - 1}_n (\mathcal{F})
     \xrightarrow{j^{(r)}_{p+r-1,q-r+1}} E^{(r)}_{p,q}
     (\mathcal{F}) \xrightarrow{\partial^{(r)}_{p,q}}
     H_{n - 1}^{p - r, p - 1} (\mathcal{F})
     \xrightarrow{i^{(r)}_{p - 1,q}} H_{n - 1}^{p
     - r + 1, p} (\mathcal{F}) \rightarrow \cdots \]

  Moreover, for each $r \geq 1, n=p+q$, 
  %%$\ensuremath{\operatorname{Im}}(i^{(r)_{}}_{n, s}) = H^{s, s + r}_n (\mathcal{F})$.
 \[
 \ensuremath{\operatorname{Im}}(i^{(r)}_{p+r-1,q-r+1}) = H^{p, p + r}_n (\mathcal{F}).
 \]
\end{theorem}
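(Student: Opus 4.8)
The plan is to read off the long exact sequence directly from the fact, established in Section~\ref{sec:exact-couples}, that each derived couple $\mathcal{C}^{(r)}(\mathcal{F})$ is again an exact couple, and then to relabel its $D$-terms as persistent homology groups by invoking Lemma~\ref{lem:basic}.

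First I would recall the elementary, purely formal fact that an exact couple
\[ \begin{array}{lllll}
     &  & E &  & \\
     & p \nearrow &  & \partial \searrow & \\
     D &  & \mathop{\longleftarrow}\limits^{i} &  & D
   \end{array} \]
unrolls into the long exact sequence
\[ \cdots \xrightarrow{\ \partial\ } D \xrightarrow{\ i\ } D \xrightarrow{\ p\ } E \xrightarrow{\ \partial\ } D \xrightarrow{\ i\ } D \xrightarrow{\ p\ } E \xrightarrow{\ \partial\ } \cdots, \]
in which exactness at each occurrence of $D$ and of $E$ is literally one of the three exactness conditions at the vertices of the triangle. Applied to $\mathcal{C}^{(r)}(\mathcal{F})$, this gives a long exact sequence built from copies of $D^{(r)}(\mathcal{F})$ and $E^{(r)}(\mathcal{F})$ and the maps $i^{(r)}, p^{(r)}, \partial^{(r)}$.

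Next I would split this sequence into its homogeneous components. Since $i^{(r)}$, $p^{(r)}$, $\partial^{(r)}$ are bi-graded of bi-degrees $(0,1)$, $(0,0)$, $(-1,-r)$ respectively, the unrolled sequence is a direct sum of sequences indexed by the bi-grading, and exactness can be verified one component at a time. Tracking the bi-degree shifts shows that the piece of the sequence running through $E^{(r)}_{n,s}(\mathcal{F})$ has the form
\[ \cdots \xrightarrow{\ i^{(r)}\ } D^{(r)}_{n,s}(\mathcal{F}) \xrightarrow{p^{(r)}_{n,s}} E^{(r)}_{n,s}(\mathcal{F}) \xrightarrow{\partial^{(r)}_{n,s}} D^{(r)}_{n-1,s-r}(\mathcal{F}) \xrightarrow{i^{(r)}_{n-1,s-r}} D^{(r)}_{n-1,s-r+1}(\mathcal{F}) \xrightarrow{\ p^{(r)}\ } \cdots . \]
Now I invoke Lemma~\ref{lem:basic}: it identifies $D^{(r)}_{n,s}(\mathcal{F}) = H_n^{s,\,s+r-1}(\mathcal{F})$, hence (using $(s-r)+(r-1)=s-1$ and $(s-r+1)+(r-1)=s$) also $D^{(r)}_{n-1,s-r}(\mathcal{F}) = H_{n-1}^{s-r,\,s-1}(\mathcal{F})$ and $D^{(r)}_{n-1,s-r+1}(\mathcal{F}) = H_{n-1}^{s-r+1,\,s}(\mathcal{F})$, and it identifies $i^{(r)}_{n-1,s-r}$ with $i_{n-1}^{s-1,\,s}$ restricted to $D^{(r)}_{n-1,s-r}(\mathcal{F})$. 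Substituting these identifications turns the displayed component into precisely the long exact sequence asserted in the theorem.

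For the final claim I would compute directly from Lemma~\ref{lem:basic}: since $i^{(r)}_{n,s} = i_n^{s+r-1,\,s+r}\!\mid_{D^{(r)}_{n,s}(\mathcal{F})}$ and $D^{(r)}_{n,s}(\mathcal{F}) = \tmop{Im}(i_n^{s,\,s+r-1})$, we get $\tmop{Im}(i^{(r)}_{n,s}) = i_n^{s+r-1,\,s+r}\bigl(\tmop{Im}(i_n^{s,\,s+r-1})\bigr) = \tmop{Im}\bigl(i_n^{s+r-1,\,s+r}\circ i_n^{s,\,s+r-1}\bigr) = \tmop{Im}(i_n^{s,\,s+r}) = H_n^{s,\,s+r}(\mathcal{F})$, where the third equality is the functoriality of the maps induced by the composable inclusions $X_s \hookrightarrow X_{s+r-1} \hookrightarrow X_{s+r}$. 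I do not expect a genuine obstacle in this argument; the only point needing care is the index bookkeeping when unrolling the couple, together with making sure that the grading convention for $D^{(r)}(\mathcal{F})$ in play is the one fixed after Remark~\ref{rem:indices} and already built into Lemma~\ref{lem:basic}.
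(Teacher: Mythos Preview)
Your proposal is correct and is precisely a fleshed-out version of the paper's one-line proof, which reads ``Unravel the exact couple $\mathcal{C}^{(r)}(\mathcal{F})$ and use Lemma~\ref{lem:basic}.'' You have faithfully carried out both of these steps, including the index bookkeeping and the functoriality argument for $\operatorname{Im}(i^{(r)}_{n,s})$.
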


\begin{proof}
Unravel the exact couple $\mathcal{C}^{(r)}
(\mathcal{F})$ and use Lemma \ref{lem:basic}.
\end{proof}

\section{Relations between $\dim_k E^r_{p,q} (\mathcal{F})$ and $b_n^{s, t}
(\mathcal{F})$}\label{sec:proofs}

In this section we prove the main theorems.

\begin{proof}[Proof of Theorem \ref{thm:spectral-to-persistent}]
Using
Theorem \ref{thm:basic}, Lemma \ref{lem:exact} and Lemma \ref{lem:basic} we
obtain that for 
%%each $r, n, s$ with $r > 0, n, s \geq 0$,
$r,p,q\in \Z, r\geq 1$ and $n=p+q$,

\begin{eqnarray*}
  \dim_k E^{(r)}_{p,q} (\mathcal{F}) & = &  (\dim_k D^{(r)}_{p+r-1,q-r+1} - \dim_k
  \tmop{Im} (i^{(r)}_{p+r-2,q-r+2})) +\\
  &  &  (\dim_k D^{(r)}_{p - 1,q} - \dim_k \tmop{Im}
  (i^{(r)}_{p- 1,q}))\\
  & = & (b_n^{p, p + r - 1} (\mathcal{F}) - b_n^{p - 1, p + r - 1}
  (\mathcal{F})) + (b_{n - 1}^{p - r, p - 1} (\mathcal{F}) - b_{n - 1}^{p - r,
  p} (\mathcal{F})).
\end{eqnarray*}
\end{proof}

\begin{remark}
  Notice that for $r \geq \max (p, N - p + 1)$ we have
  \begin{eqnarray*}
    b_n^{p, p + r - 1} (\mathcal{F}) & = & b_n^{p, N} (\mathcal{F}),\\
    b_n^{p - 1, p + r - 1} (\mathcal{F}) & = & b_n^{p - 1, N} (\mathcal{F}),\\
    b_{n - 1}^{p - r, p - 1} (\mathcal{F}) & = & 0,\\
    b_{n - 1}^{p - r, p} (\mathcal{F}) & = & 0.
  \end{eqnarray*}
  Using Remark \ref{rem:infty} we verify that
  \begin{eqnarray*}
    \dim_k E^{(\infty)}_{p,q} (\mathcal{F}) & = & \dim_k E^{(\max (p, N - p +
    1))}_{p,q} (\mathcal{F})\\
    & = & b_n^{p, N} (\mathcal{F}) - b_n^{p - 1, N} (\mathcal{F})\\
    & = & \dim_k \ensuremath{\operatorname{Im}} (i_n^{p, N}) - \dim_k
    \ensuremath{\operatorname{Im}} (i_n^{p - 1, N})\\
    & = & \dim_k  (\ensuremath{\operatorname{Im}} (i_n^{p, N})
    /\ensuremath{\operatorname{Im}} (i_n^{p - 1, N}))\\
    & = & \dim_k \ensuremath{\operatorname{Gr}}_{p} (H_n (X)) .
  \end{eqnarray*}
\end{remark}

\begin{proof}[Proof of Corollary \ref{cor:main}]
Denote for $n \geq 0$, and $s, t \in \mathbb{Z}$
\begin{eqnarray*}
  \gamma_n^{s, t} (\mathcal{F}) & = & (b_n^{s - t, s - 1} (\mathcal{F}) -
  b_n^{s - t, s} (\mathcal{F})),\\
  \nu_n^{s, t} (\mathcal{F}) & = & (b^{s, s + t - 1}_n (\mathcal{F}) - b^{s -
  1, s + t - 1}_n (\mathcal{F})) .
\end{eqnarray*}
It follows from 
%%Eqn. (\ref{eqn:main}) 
Theorem \ref{thm:spectral-to-persistent} 
that
\[ \sum_s \dim_k E^{(r)}_{s,n-s} (\mathcal{F}) = \sum_{0 \leq s \leq N + 1}
   (\nu_n^{s, r} (\mathcal{F}) + \gamma_{n - 1}^{s, r} (\mathcal{F})) . \]
Also, summing both sides of Eqn. \eqref{eqn:mu} we obtain two different
expressions for $\sum_{j - i \geq r} \mu^{i, j}_n (\mathcal{F})$, one in terms
of the $\nu_n^{s, t}$ , and the other in terms of the $\gamma_n^{s, t} .$

More precisely:
\begin{eqnarray}
  \sum_{j - i \geq r} \mu^{i, j}_n (\mathcal{F}) & = & \sum_{j - i \geq r}
  ((b^{i, j - 1}_n (\mathcal{F}) - b^{i, j}_n (\mathcal{F})) - (b^{i - 1, j -
  1}_n (\mathcal{F}) - b^{i - 1, j}_n (\mathcal{F}))) \nonumber\\
  & = & \sum_{r \leq t \leq N + 1} \sum_{0 \leq s \leq N + 1} (\gamma_n^{s,
  t} (\mathcal{F}) - \gamma_n^{s, t + 1} (\mathcal{F})),  \label{eqn:gamma}
\end{eqnarray}

\begin{eqnarray}
  \sum_{j - i \geq r} \mu^{i, j}_n (\mathcal{F}) & = & \sum_{j - i \geq r}
  ((b^{i, j - 1}_n (\mathcal{F}) - b^{i - 1, j - 1}_n (\mathcal{F})) - (b^{i,
  j}_n (\mathcal{F}) - b^{i - 1, j}_n (\mathcal{F}))) \nonumber\\
  & = & \sum_{r \leq t \leq N + 1} \sum_{0 \leq s \leq N + 1} (\nu_n^{s, t}
  (\mathcal{F}) - \nu_n^{s, t + 1} (\mathcal{F})) .  \label{eqn:nu}
\end{eqnarray}

It follows first by changing the order of summation, and then using
telescoping on Eqn. \eqref{eqn:gamma} that
\begin{eqnarray}
  \sum_{j - i \geq r} \mu^{i, j}_n (\mathcal{F}) & = & \sum_{0 \leq s \leq N +
  1} \sum_{r \leq t \leq N + 1} (\gamma_n^{s, t} (\mathcal{F}) - \gamma_n^{s,
  t + 1} (\mathcal{F}))  \label{eqn:cormain2} \nonumber\\
  & = & \sum_{0 \leq s \leq N + 1} (\gamma_n^{s, r} (\mathcal{F}) -
  \gamma_n^{s, N + 1}) \nonumber\\
  & = & \sum_{0 \leq s \leq N + 1} \gamma_n^{s, r} (\mathcal{F}) - \sum_{0
  \leq s \leq N + 1} ((b_n^{s - N - 1, s - 1} (\mathcal{F}) - b_n^{s - N - 1,
  s} (\mathcal{F}))) \nonumber\\
  & = & \sum_s \gamma_n^{s, r} (\mathcal{F}) . %%\nonumber
\end{eqnarray}
Similarly from Eqn. \eqref{eqn:nu}  we get that
\begin{eqnarray}
  \sum_{j - i \geq r} \mu^{i, j}_n (\mathcal{F}) & = & \sum_{0 \leq s \leq N +
  1} \sum_{r \leq t \leq N + 1} (\nu_n^{s, t} (\mathcal{F}) - \nu_n^{s, t + 1}
  (\mathcal{F}))  \label{eqn:cormain3} \nonumber\\
  & = & \sum_{0 \leq s \leq N + 1} (\nu_n^{s, r} (\mathcal{F}) - \nu_n^{s, N
  + 1}) \nonumber\\
  & = & \sum_{0 \leq s \leq N + 1} \nu_n^{s, r} (\mathcal{F}) - \sum_{0 \leq
  s \leq N + 1} (b^{s, s + N}_n (\mathcal{F}) - b^{s - 1, s + N}_n
  (\mathcal{F})) \nonumber\\
  & = & \sum_s \nu_n^{s, r} (\mathcal{F}) - b_n (X) . %%\nonumber
\end{eqnarray}
The corollary follows from Eqns. \eqref{eqn:cormain2} and
(\ref{eqn:cormain3}).
\end{proof}

\begin{proof}[Proof of Theorem \ref{thm:persistence-in-terms-of-spectral}] 
It follows from Theorem
\ref{thm:spectral-convergence} and (\ref{eqn:graded}) that for each $n \geq 0$
and $i \leq t$,
\begin{eqnarray*}
  \dim_k E^{\infty}_{i,n-i} (\mathcal{F}_{\leq t}) & = & b_n^{i, t}
  (\mathcal{F}) - b_n^{i - 1, t} (\mathcal{F}) .
\end{eqnarray*}
The theorem follows after taking the sum of both sides over all $i, 0 \leq i
\leq s$, and noting that $b_n^{i, t} (\mathcal{F}) = 0$ for $i \leq 0$.
Finally, by Remark \ref{rem:infty} we have that $E^{\infty}_{i,n-i}
(\mathcal{F}_{\leq t}) \cong E^{(\max (i, t - i + 1))}_{ i,n-i}
(\mathcal{F}_{\leq t})$.
\end{proof}

\section*{Acknowledgments}{The authors thank Estanislao Herscovich, Ana Romero and an anonymous referee for several comments and corrections.}

\bibliographystyle{amsplain}
\bibliography{master}

\end{document}